\newtheorem{thm}{Theorem}[section]
\newtheorem{cor}[thm]{Corollary}
\newtheorem{lem}[thm]{Lemma}
\newtheorem{prop}[thm]{Proposition}
\newtheorem*{thm*}{Theorem}
\newtheorem*{cor*}{Corollary}
\theoremstyle{definition}
\newtheorem*{claim*}{Claim}
\numberwithin{equation}{thm}
\def\dsg{\operatorname{\mathsf{D}_{sg}}}
\def\lcm{\operatorname{\mathsf{\underline{CM}}}}
\def\ann{\operatorname{\mathsf{Ann}}}
\def\db{\operatorname{\mathsf{D}^b}}
\def\mod{\operatorname{\mathsf{mod}}}
\def\ker{\operatorname{\mathsf{Ker}}}
\def\coker{\operatorname{\mathsf{Coker}}}
\def\D{\operatorname{\mathsf{D}}}
\def\rad{\operatorname{\mathsf{rad}}}
\def\height{\operatorname{ht}}
\def\LL{\operatorname{\ell\ell}}
\def\e{\operatorname{e}}
\def\depth{\operatorname{depth}}
\def\syz{\mathsf{\Omega}}
\def\H{\mathsf{H}}
\def\B{\mathsf{B}}
\def\Z{\mathsf{Z}}
\def\K{\mathsf{K}}
\def\Ext{\mathsf{Ext}}
\def\Hom{\mathsf{Hom}}
\def\N{\mathfrak{N}}
\def\m{\mathfrak{m}}
\def\p{\mathfrak{p}}
\def\A{\mathcal{A}}
\def\ZZ{\mathbb{Z}}
\def\xx{\text{{\boldmath$x$}}}
\begin{document}
\setlength{\baselineskip}{15pt}
\title{Upper bounds for dimensions of singularity categories}
\date{\today}
\author{Hailong Dao}
\address{Department of Mathematics, University of Kansas, Lawrence, KS 66045-7523, USA}
\email{hdao@math.ku.edu}
\urladdr{http://www.math.ku.edu/~hdao/}
\author{Ryo Takahashi}
\address{Graduate School of Mathematics, Nagoya University, Furocho, Chikusaku, Nagoya 464-8602, Japan/Department of Mathematics, University of Nebraska, Lincoln, NE 68588-0130, USA}
\email{takahashi@math.nagoya-u.ac.jp}
\urladdr{http://www.math.nagoya-u.ac.jp/~takahashi/}
\thanks{2010 {\em Mathematics Subject Classification.} Primary 13D09; Secondary 13C14, 18E30}
\thanks{{\em Key words and phrases.} dimension of triangulated category, singularity category, stable category of (maximal) Cohen-Macaulay modules}
\thanks{The first author was partially supported by NSF grants DMS 0834050 and DMS 1104017. The second author was partially supported by JSPS Grant-in-Aid for Young Scientists (B) 22740008 and by JSPS Postdoctoral Fellowships for Research Abroad}
\begin{abstract}
This paper gives upper bounds for the dimension of the singularity category of a Cohen-Macaulay local ring with an isolated singularity.
One of them recovers an upper bound given by Ballard, Favero and Katzarkov in the case of a hypersurface.
\end{abstract}
\maketitle
\section{Results}

The notion of the dimension of a triangulated category has been introduced by Bondal, Rouquier and Van den Bergh \cite{BV,R}, which measures the number of extensions necessary to build the category out of a single object.
The singularity category $\dsg(R)$ of a noetherian ring/scheme $R$ is one of the most crucial triangulated categories.
This has been introduced by Buchweitz \cite{B} by the name of stable derived category.
There are many studies on singularity categories by Orlov \cite{O1,O2,O3,O4} in connection with the Homological Mirror Symmetry Conjecture.

It is a natural and fundamental problem to find upper bounds for the dimension of the singularity category of a noetherian ring.
In general, the dimension of the singularity category is known to be finite for large classes of excellent rings containing fields \cite{ddc,R}, but only a few explicit upper bounds have been found so far.
The Loewy length is an upper bound for an artinian ring \cite{R}, and so is the global dimension for a ring of finite global dimension \cite{C,KK}.
Recently, an upper bound for an isolated hypersurface singularity has been given \cite{BFK}.

The main purpose of this paper is to give upper bounds for a Cohen-Macaulay local ring with an isolated singularity.
The main result of this paper is the following theorem.

\begin{thm}\label{1.2}
Let $(R,\m,k)$ be a complete equicharacteristic Cohen-Macaulay local ring with $k$ perfect.
Suppose that $R$ is an isolated singularity.
Then the sum $\N^R$ of the Noether differents of $R$ is $\m$-primary.
Let $I$ be an $\m$-primary ideal of $R$ contained in $\N^R$.
\begin{enumerate}[\rm(1)]
\item
One has $\dsg(R)=\langle k\rangle_{(\nu(I)-\dim R+1)\LL(R/I)}$.
Hence there is an inequality
$$
\dim\dsg(R)\le(\nu(I)-\dim R+1)\LL(R/I)-1.
$$
\item
Assume that $k$ is infinite.
Then $\dsg(R)=\langle k\rangle_{\e(I)}$, and hence one has
$$
\dim\dsg(R)\le\e(I)-1.
$$
\end{enumerate}
\end{thm}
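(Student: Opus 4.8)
The plan is to realize every object of $\dsg(R)$ as built from $k$ using a bounded number of cones, where the bound is controlled by the ideal $I$ acting trivially on the stable category. Since $R$ is a complete equicharacteristic Cohen–Macaulay local ring with perfect residue field, by Cohen's structure theorem $R$ is module-finite over a regular local subring $A = k[[\xx]]$ with $\xx = x_1,\dots,x_d$ a system of parameters, $d = \dim R$. The key input is that the Noether different $\N^R_{R/A}$ annihilates $\operatorname{\mathsf{Ext}}^i_R(M,N)$ for all $i > 0$ and all $M,N \in \mod R$; since $R$ is an isolated singularity, summing over (finitely many) choices of $\xx$ gives that $\N^R$ is $\m$-primary, which is the first assertion. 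Fix now such an $I \subseteq \N^R$. Because $I$ kills all positive Ext modules between finitely generated modules, for any maximal Cohen–Macaulay module $M$ the identity map of $M$ in $\lcm(R)$ is killed after multiplication by any element of $I$: more precisely, if $x \in I$ then $x\cdot\mathrm{id}_M$ factors through a free module, so in $\dsg(R) \simeq \lcm(R)$ one has that $M/xM$ sits in a triangle forcing $M$ to be a direct summand (up to the relevant shift) of an extension of $M/IM$ by itself — this is the standard ``Ext-annihilator builds the module from $M/IM$'' argument.

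The next step is to bound how many cones it takes to build $M/IM$ from $k$. Here $M/IM$ is a module over the artinian ring $R/I$, so it has a filtration with $\LL(R/I)$ layers each killed by $\m$, i.e. each a $k$-vector space; thus $M/IM \in \langle k\rangle_{\LL(R/I)}$ inside $\db(\mod R)$, hence inside $\dsg(R)$. Combining with the previous paragraph, $M \in \langle M/IM \rangle_{?} \subseteq \langle k\rangle_{?\cdot \LL(R/I)}$; the multiplicity ``$?$'' is where $\nu(I) = $ minimal number of generators of $I$ enters, via an inductive unpeeling of the generators of $I$ one at a time, each step replacing $M$ by $M/xM$ and costing one more ``layer'' in the generated subcategory, with the $-\dim R + 1$ coming from the fact that a minimal generating set of an $\m$-primary ideal can be chosen to contain a system of parameters, and passing modulo a regular element does not increase the cone count. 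This yields part (1). For part (2), when $k$ is infinite we may choose the system of parameters inside $I$ to be a minimal reduction, so $\e(I) = \LL(R/(\xx))$ and a Koszul-type argument replaces the crude bound $(\nu(I)-\dim R+1)\LL(R/I)$ by the sharper $\e(I)$: one filters $M$ using the $\xx$-adic filtration, whose associated graded is a module over the polynomial ring and whose length is governed by the multiplicity.

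The main obstacle I anticipate is making the ``unpeeling'' in part (1) precise — that is, showing that passing from $M$ to $M/xM$ for $x$ a nonzerodivisor inside $I$ genuinely costs only one additional step in $\langle - \rangle$, and keeping careful track of shifts so that the triangles in $\dsg(R)$ (equivalently $\lcm(R)$, using Buchweitz's equivalence for Gorenstein rings and more generally working in $\dsg$ directly for the Cohen–Macaulay case) assemble into the stated bound rather than something weaker. The interplay between the $R/I$-module structure on $M/IM$ and the triangulated structure on $\dsg(R)$ must be handled so that the factor $\LL(R/I)$ multiplies the generator-count rather than adding to it. For part (2) the subtlety is verifying that the reduction $\xx$ of $I$ can be taken to be a regular sequence (possible since $R$ is Cohen–Macaulay) and that the resulting filtration argument is compatible with the singularity category structure; once that is in place, the bound $\e(I) - 1$ follows from the length count $\sum_i \ell(\m^i M/\m^{i+1}M)$ being asymptotically controlled by $\e(I)$.
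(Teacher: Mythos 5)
Your high-level scaffolding is right---Wang's theorem that the Noether different annihilates stable $\Ext$'s, Yoshino's lemma making $\N^R$ $\m$-primary for an isolated singularity over a perfect field, the reduction to a maximal Cohen--Macaulay syzygy via Lemma~\ref{2.4}, and the observation that an $R/I$-module sits in $\langle k\rangle_{\LL(R/I)}$ by its $\m$-adic filtration. But the two places where you say the real work happens are exactly where your argument is too vague to be checked, and in both cases the paper does something cleaner and different.

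The crucial combinatorial factor $(\nu(I)-\dim R+1)$ does not come from ``unpeeling generators one at a time'' or from ``passing modulo a regular element not increasing the cone count.'' If you replace $M$ by $M/x_iM$ one generator at a time you naively get a factor of $\nu(I)$, not $\nu(I)-\dim R+1$, and it is unclear what precise triangle justifies each unpeeling step with a cost of one. The paper instead takes the entire Koszul complex $\K(\xx,M)$ at once, with $\xx$ a minimal generating set of $I$. Proposition~\ref{2.3} shows that since each $x_i$ kills $\mathrm{id}_M$ in $\dsg(R)$, the complex $\K(\xx,M)$ has $M$ as a direct summand in $\dsg(R)$ (iterating $\K(x_i,M)\cong M\oplus M[1]$ and using that tensoring with a perfect complex descends to $\dsg(R)$). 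Proposition~\ref{2.2} then bounds $\K(\xx,M)$ in $\langle k\rangle$: by depth-sensitivity (\cite[Theorem~1.6.17(b)]{BH}) its homology is concentrated in degrees $[\depth M-n,\,0]$, giving exactly $n-\depth M+1=\nu(I)-\dim R+1$ possibly nonzero homologies, each an $R/I$-module and hence in $\langle k\rangle_{\LL(R/I)}$, and Lemma~\ref{2.1} glues these together. This is the step your sketch replaces with a hand-wave, and I don't see how your version produces the stated exponent.

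For part (2) you propose an independent filtration/associated-graded argument; this is unnecessary and not what is wanted. The paper simply picks a minimal reduction $Q\subseteq I$ generated by a system of parameters (possible since $k$ is infinite), notes $Q\subseteq I\subseteq\N^R$, and applies part (1) to $Q$: then $\nu(Q)=\dim R$, so $(\nu(Q)-\dim R+1)\LL(R/Q)=\LL(R/Q)\le\ell(R/Q)=\e(Q)=\e(I)$. No new Koszul or filtration argument is needed; part (2) is a one-line corollary of part (1). Your version would also have to explain why the $\xx$-adic filtration of $M$ interacts with $\langle-\rangle_\bullet$ in $\dsg(R)$, which is not obvious and is not addressed.
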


Here we explain the notation used in the above theorem.
Let $(R,\m,k)$ be a commutative noetherian complete equicharacteristic local ring.
Let $A$ be a Noether normalization of $R$, that is, a formal power series subring $k[[x_1,\dots,x_d]]$, where $x_1,\dots,x_d$ is a system of parameters of $R$.
Let $R^e=R\otimes_AR$ be the enveloping algebra of $R$ over $A$.
Define a map $\mu:R^e\to R$ by $\mu(a\otimes b)=ab$ for $a,b\in R$.
Then the ideal $\N^R_A=\mu(\ann_{R^e}\ker\mu)$ of $R$ is called the Noether different of $R$ over $A$.
We denote by $\N^R$ the sum of $\N^R_A$, where $A$ runs through the Noether normalizations of $R$.
For an $\m$-primary ideal $I$ of $R$, let $\nu(I)=\dim_k(I\otimes_Rk)$ be the minimal number of generators of $I$ and $\e(I)=\lim_{n\to\infty}\frac{d!}{n^d}\ell(R/I^{n+1})$ the multiplicity of $I$.
The Loewy length of an artinian ring $\Lambda$ is denoted by $\LL(\Lambda)$, that is, the minimum positive integer $n$ with $(\rad\Lambda)^n=0$.

Our Theorem \ref{1.2} yields the following result.

\begin{cor}\label{1.3'}
Let $k$ be a perfect field, and let $R=k[[x_1,\dots,x_n]]/(f_1,\dots,f_m)$ be a Cohen-Macaulay ring having an isolated singularity.
Let $J$ be the Jacobian ideal of $R$, namely, the ideal generated by the $h$-minors of the Jacobian matrix $(\frac{\partial f_i}{\partial x_j})$, where $h=\height(f_1,\dots,f_m)$.
\begin{enumerate}[\rm(1)]
\item
One has $\dsg(R)=\langle k\rangle_{(\nu(J)-\dim R+1)\LL(R/J)}$.
Hence there is an inequality $\dim\dsg(R)\le(\nu(J)-\dim R+1)\LL(R/J)-1$.
\item
If $k$ is infinite, then $\dsg(R)=\langle k\rangle_{\e(J)}$, and it holds that $
\dim\dsg(R)\le\e(J)-1$.
\end{enumerate}
\end{cor}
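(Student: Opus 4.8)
The plan is to deduce Corollary \ref{1.3'} from Theorem \ref{1.2} by identifying the Jacobian ideal $J$ with (a member of the family comprising) the Noether different $\N^R$, or at least showing $J\subseteq\N^R$ up to radical so that the bounds transfer. First I would recall the classical comparison between the Noether different and the Kähler different: for a presentation $R=S/\mathfrak{a}$ with $S=k[[x_1,\dots,x_n]]$, the Kähler different $\mathfrak{d}_K(R/k)$ is generated by the $c$-minors of the Jacobian matrix where $c$ is the codimension, and there is a containment $\mathfrak{d}_K(R/A)\subseteq\N^R_A$ for any Noether normalization $A$; conversely the radicals agree when $R$ is generically smooth over $A$, which holds here because $R$ has an isolated singularity (so $R$ is smooth over $A$ on the punctured spectrum, $A$ being a regular subring). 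Since $R$ has an isolated singularity, $J$ is $\m$-primary, hence $\sqrt{J}=\m=\sqrt{\N^R}$, so $J$ and $\N^R$ contain common $\m$-primary powers.

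The subtlety is that Theorem \ref{1.2} requires the $\m$-primary ideal $I$ to be \emph{contained in} $\N^R$, whereas the containment above runs the other way ($\mathfrak{d}_K\subseteq\N^R_A$, but the full Jacobian $J$ involves $h$-minors with $h=\height(f_1,\dots,f_m)$, which matches the Kähler different of $R$ over $A$ under a suitable choice of $A$). So the key step is to argue that $J$ itself, not merely a power of it, lies in $\N^R$. For this I would use that $\N^R$ is the sum over all Noether normalizations $A$, together with the standard fact (going back to Noether, and to Lipman--Scheja--Storch in this formulation) that the Kähler different $\mathfrak{d}_{n-d}(R/k)$ generated by the maximal-size minors of the Jacobian — which is exactly $J$ when $h=n-d=\operatorname{codim}R$ — is contained in $\N^R_A$ for a generic choice of Noether normalization, hence in $\N^R$. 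Then one applies Theorem \ref{1.2}(1) and (2) with $I=J$ verbatim.

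The main obstacle I anticipate is pinning down precisely the relationship between the $h$-minor Jacobian ideal as defined in the corollary (with $h=\height(f_1,\dots,f_m)$, which need not equal $n-\dim R$ unless $R$ is Cohen--Macaulay and the $f_i$ form part of a system defining a complete intersection up to the right codimension — here $R$ is Cohen--Macaulay so $\height(f_1,\dots,f_m)=n-\dim R$) and the Kähler different computed relative to a Noether normalization $A=k[[x_1,\dots,x_d]]$. One must check that after a generic linear change of coordinates the last $d$ variables form a Noether normalization and the $(n-d)$-minors of $(\partial f_i/\partial x_j)_{1\le j\le n-d}$ generate the same ideal up to radical (or exactly, using that the $f_i$ can be taken to be a regular sequence of length $n-d$ since $R$ is Cohen--Macaulay), so that $J\subseteq\N^R_A$. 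Once that bookkeeping is done, the corollary is immediate; I would also remark that when $R$ is a complete intersection this recovers cleanly and connects to the hypersurface bound of \cite{BFK} mentioned in the introduction.
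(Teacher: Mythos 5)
Your overall route is the same as the paper's: verify that $J$ is $\m$-primary and contained in $\N^R$, then invoke Theorem~\ref{1.2} with $I=J$. The paper disposes of the containment and the singular-locus statement in one line by citing Wang \cite[Lemmas 4.3, 5.8 and Propositions 4.4, 4.5]{W}, whereas you reconstruct the argument via the comparison of K\"ahler and Noether differents; that is the correct idea and roughly what Wang does.

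One step in your sketch is wrong as stated, though. You claim the full Jacobian ideal $J=\mathfrak{d}_{n-d}(R/k)$ is contained in $\N^R_A$ for a \emph{single} generic Noether normalization $A$. This fails already for hypersurfaces: if $R=k[[x_1,\dots,x_n]]/(f)$ and $A=k[[y_2,\dots,y_n]]$ after a generic linear change of variables, then $R$ is a (complete intersection, hence free) hypersurface over $A$ and $\N^R_A=\mathfrak{d}_K(R/A)=(\partial f/\partial y_1)$ is a principal ideal, which is strictly smaller than $J=(\partial f/\partial x_1,\dots,\partial f/\partial x_n)$ in general. More generally, for a fixed $A$ in adapted coordinates, $\mathfrak{d}_K(R/A)$ is generated only by the $h$-minors drawn from the ``fiber'' columns of the Jacobian, a proper subideal of $J$. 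The statement you actually need, and what the paper gets from \cite{W}, is that as $A$ ranges over Noether normalizations (equivalently, generic linear coordinate changes) the ideals $\mathfrak{d}_K(R/A)\subseteq\N^R_A$ collectively generate $J$, so $J\subseteq\sum_A\N^R_A=\N^R$. With that correction your plan is sound: $J$ defines the singular locus so isolated singularity gives $\sqrt{J}=\m$, and Theorem~\ref{1.2}(1),(2) apply verbatim to $I=J$.
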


Corollary \ref{1.3'} immediately recovers the following result, which is stated in \cite{BFK}.

\begin{cor}[Ballard-Favero-Katzarkov]
Let $k$ be an algebraically closed field of characteristic zero.
Let $R=k[[x_1,\dots,x_n]]/(f)$ be an isolated hypersurface singularity.
Then $\dsg(R)=\langle k\rangle_{2\LL(R/(\frac{\partial f}{\partial x_1},\dots,\frac{\partial f}{\partial x_n})R)}$, and hence $\dim\dsg(R)\le2\LL(R/(\textstyle\frac{\partial f}{\partial x_1},\dots,\frac{\partial f}{\partial x_n})R)-1$.
\end{cor}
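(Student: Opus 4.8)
\emph{Proof proposal.} The plan is to deduce this directly from Corollary \ref{1.3'} applied to the presentation $R=k[[x_1,\dots,x_n]]/(f_1,\dots,f_m)$ with $m=1$ and $f_1=f$, so there is essentially one nontrivial inequality to check. First I would verify the hypotheses of Corollary \ref{1.3'}: an algebraically closed field of characteristic zero is perfect, a hypersurface ring $k[[x_1,\dots,x_n]]/(f)$ is Cohen-Macaulay, and $R$ has an isolated singularity by assumption. Since $f$ is a nonzerodivisor in the regular local ring $k[[x_1,\dots,x_n]]$, the ideal $(f)$ has height $h=1$; hence the Jacobian matrix $(\frac{\partial f_i}{\partial x_j})$ is the single row $(\frac{\partial f}{\partial x_1},\dots,\frac{\partial f}{\partial x_n})$, its $h=1$ minors are precisely its entries, and the Jacobian ideal of $R$ is $J=(\frac{\partial f}{\partial x_1},\dots,\frac{\partial f}{\partial x_n})R$. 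Also $\dim R=n-1$.

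The only real estimate needed is $\nu(J)\le n=\dim R+1$, which is immediate because $J$ is generated by the $n$ partial derivatives. Consequently $\nu(J)-\dim R+1\le 2$, and therefore $(\nu(J)-\dim R+1)\LL(R/J)\le 2\LL(R/J)$. Now I would invoke the monotonicity of the thick-closure balls: $\langle k\rangle_i\subseteq\langle k\rangle_j$ whenever $i\le j$, and each $\langle k\rangle_i$ sits inside $\dsg(R)$, so the equality $\dsg(R)=\langle k\rangle_{(\nu(J)-\dim R+1)\LL(R/J)}$ of Corollary \ref{1.3'}(1) forces $\dsg(R)=\langle k\rangle_i\subseteq\langle k\rangle_{2\LL(R/J)}\subseteq\dsg(R)$, whence $\dsg(R)=\langle k\rangle_{2\LL(R/J)}$. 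The inequality $\dim\dsg(R)\le 2\LL(R/J)-1$ then follows at once from the definition of the dimension of a triangulated category: if $\mathcal T=\langle G\rangle_{\ell}$ then $\dim\mathcal T\le\ell-1$.

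I do not expect a genuine obstacle here: all the substance is in Corollary \ref{1.3'}, and what remains is the observation that the Jacobian matrix of a hypersurface has a single row together with the monotonicity of the balls $\langle k\rangle_i$. The one point worth flagging is that the statement as quoted assumes $k$ algebraically closed of characteristic zero, while the argument uses only that $k$ is perfect; so Corollary \ref{1.3'} actually recovers a somewhat stronger assertion, and moreover gives the potentially sharper constant $\nu(J)-\dim R+1$ in place of $2$ when $\nu(J)<n$.
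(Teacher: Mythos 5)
Your proof is correct and is essentially the argument the paper intends when it says the corollary is an immediate consequence of Corollary~\ref{1.3'}: one computes $h=1$, $\dim R=n-1$, observes $\nu(J)\le n$ so $\nu(J)-\dim R+1\le 2$, and uses monotonicity of the balls $\langle k\rangle_i$ to pass from the sharper constant to $2$. Your closing remark that the bound $(\nu(J)-\dim R+1)\LL(R/J)$ is in general stronger and that only perfectness of $k$ is needed is also accurate.
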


As another application of Theorem \ref{1.2}, we obtain upper bounds for the dimension of the stable category $\lcm(R)$ of maximal Cohen-Macaulay modules over an excellent Gorenstein ring $R$:

\begin{cor}\label{1.3}
Let $R$ be an excellent Gorenstein equicharacteristic local ring with perfect residue field $k$, and assume that $R$ is an isolated singularity.
Then $\N^{\widehat R}$ is an $\widehat{\m}$-primary ideal of the completion $\widehat R$ of $R$.
Let $I$ be an $\widehat{\m}$-primary ideal contained in $\N^{\widehat R}$.
Put $d=\dim R$, $n=\nu(I)$, $l=\LL({\widehat R}/I)$ and $e=\e(I)$.
\begin{enumerate}[\rm(1)]
\item
One has $\lcm(R)=\langle\syz^dk\rangle_{(n-d+1)l}$, and $\dim\lcm(R)\le(n-d+1)l-1$.
\item
If $k$ is infinite, then $\lcm(R)=\langle\syz^dk\rangle_e$, and one has $\dim\lcm(R)\le e-1$.
\end{enumerate}
\end{cor}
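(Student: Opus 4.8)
The plan is to derive Corollary~\ref{1.3} from Theorem~\ref{1.2} applied to the completion $\widehat R$, using Buchweitz's equivalence and a descent along the completion functor. First I would check that $\widehat R$ meets the hypotheses of Theorem~\ref{1.2}: it is a complete local ring, it is equicharacteristic with residue field $k$ since completion alters neither, and it is Cohen--Macaulay because $R$ is Gorenstein (in fact $\widehat R$ is Gorenstein). The only point that uses that $R$ is excellent --- rather than merely Cohen--Macaulay local --- is that $\widehat R$ is again an isolated singularity: for an excellent local ring the formal fibres are geometrically regular, so $R_{\p}$ regular for every $\p\ne\m$ forces $\widehat R_{\mathfrak{q}}$ regular for every $\mathfrak{q}\ne\widehat\m$ (a flat local map with regular base and regular fibre has regular total ring, and every $\mathfrak{q}\ne\widehat\m$ contracts to some $\p\ne\m$). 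Thus Theorem~\ref{1.2} gives that $\N^{\widehat R}$ is $\widehat\m$-primary and, writing $d=\dim\widehat R=\dim R$, that $\dsg(\widehat R)=\langle k\rangle_{(n-d+1)l}$, and $\dsg(\widehat R)=\langle k\rangle_{e}$ when $k$ is infinite.

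Next I would pass to the stable category of maximal Cohen--Macaulay modules. Since $\widehat R$ is Gorenstein, Buchweitz's theorem provides a triangle equivalence $\dsg(\widehat R)\simeq\lcm(\widehat R)$ under which the object $k$ corresponds to a shift of $\syz^{d}k$: the exact sequences $0\to\syz^{i+1}k\to F\to\syz^{i}k\to0$ with $F$ free give $k\cong\syz^{d}k[d]$ in $\dsg(\widehat R)$, and $\syz^{d}k$ is a maximal Cohen--Macaulay $\widehat R$-module because $d=\depth\widehat R$. As $\langle G\rangle_{m}$ does not change if $G$ is replaced by a shift of itself, we obtain $\lcm(\widehat R)=\langle\syz^{d}k\rangle_{(n-d+1)l}$, and $\lcm(\widehat R)=\langle\syz^{d}k\rangle_{e}$ if $k$ is infinite.

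The remaining task, and the step I expect to be the main obstacle, is to descend these equalities from $\widehat R$ to $R$. Consider the triangulated functor $F=\widehat R\otimes_{R}-\colon\lcm(R)\to\lcm(\widehat R)$. It is fully faithful: for maximal Cohen--Macaulay $R$-modules $M,N$ the stable Hom module $\underline{\Hom}_{R}(M,N)$ is supported only at $\m$ --- localizing at $\p\ne\m$ makes $R_{\p}$ regular and $M_{\p}$ free, so $\underline{\Hom}_{R_{\p}}(M_{\p},N_{\p})=0$ --- hence it has finite length and is therefore unaffected by completion, while $\underline{\Hom}_{\widehat R}(\widehat M,\widehat N)\cong\underline{\Hom}_{R}(M,N)\otimes_{R}\widehat R$ by flat base change. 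The crucial input is that $F$ is moreover dense up to direct summands, i.e.\ every maximal Cohen--Macaulay $\widehat R$-module is a direct summand of $\widehat M$ for some maximal Cohen--Macaulay $R$-module $M$; this is an approximation property of excellent isolated singularities and is the part I expect to require the most care (or to be quoted). Granting it, $F$ becomes an equivalence after idempotent completion, so $\lcm(R)^{\natural}\simeq\lcm(\widehat R)=\langle\syz^{d}k\rangle_{(n-d+1)l}$; because each thickening $\langle\syz^{d}k\rangle_{m}$ is closed under direct summands, a maximal Cohen--Macaulay $R$-module lying in this thickening is already a summand of an object built from $\syz^{d}k$ in at most $(n-d+1)l$ steps inside $\lcm(R)$ itself, whence $\lcm(R)=\langle\syz^{d}k\rangle_{(n-d+1)l}$ and, when $k$ is infinite, $\lcm(R)=\langle\syz^{d}k\rangle_{e}$. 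The stated inequalities $\dim\lcm(R)\le(n-d+1)l-1$ and $\dim\lcm(R)\le e-1$ are then immediate from the definition of the dimension of a triangulated category.
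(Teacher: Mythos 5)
Your proposal follows the same route as the paper: complete, apply Theorem~\ref{1.2} to $\widehat R$, pass through Buchweitz's equivalence $\dsg(\widehat R)\simeq\lcm(\widehat R)$ to replace $k$ by $\syz^d k$, and finally descend the generation statement along $\lcm(R)\to\lcm(\widehat R)$. The paper compresses the last step into a single citation: it says the argument in the proof of Theorem~5.8 of Aihara--Takahashi (reference \cite{ddc}) shows that $\lcm(\widehat R)=\langle\widehat{\syz_R^dk}\rangle_r$ forces $\lcm(R)=\langle\syz_R^dk\rangle_r$. Your sketch of that step --- full faithfulness of the completion functor on $\lcm$ via finite length of stable Hom modules over an isolated singularity, density up to direct summands of the image, and the fact that a bound $\langle G\rangle_m$ in a dense triangulated subcategory is detected after idempotent completion --- is exactly the kind of argument that reference provides, and you are candid that it is the step you would have to quote. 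The only thing worth tightening is the very last inference: from $F(M)\in\langle F(\syz^dk)\rangle_m$ in $\lcm(\widehat R)$ you conclude $M\in\langle\syz^dk\rangle_m$ in $\lcm(R)$ essentially by fiat (``is already a summand of an object built \dots inside $\lcm(R)$ itself''); this needs the lemma that for a fully faithful triangle functor with dense image and $G$ in the source, $\langle FG\rangle_m$-membership of $FM$ implies $\langle G\rangle_m$-membership of $M$ --- precisely what the cited proof supplies. Your preliminary verifications (that $\widehat R$ is equicharacteristic Gorenstein with the same residue field and, using excellence of $R$, again an isolated singularity) and the identification $k\cong\syz^dk[d]$ under Buchweitz's theorem are all correct and match the paper.
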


\section{Proofs}

This section is devoted to proving our results stated in the previous section.
For the definition of the dimension of a triangulated category and related notation, we refer the reader to \cite[Definition 3.2]{R}.
We denote by $\D(\A)$ the derived category of an abelian category $\A$.
Let $\H^iX$ (respectively, $\Z^iX$, $\B^iX$) denote the $i$-th homology (respectively, cycle, boundary) of a complex $X$ of objects of $\A$, and set $\H X=\bigoplus_{i\in\ZZ}\H^iX$.

\begin{lem}\label{2.1}
Let $\A$ be an abelian category and $X$ a complex of objects of $\A$.
\begin{enumerate}[\rm(1)]
\item
Let $n$ be an integer.
If $\H^iX=0$ for all $i>n$, then there exists an exact triangle
$$
Y \to X \to \H^nX[-n] \rightsquigarrow
$$
in $\D(\A)$ such that $\H^iY\cong
\begin{cases}
0 & (i\ge n)\\
\H^iX & (i<n)
\end{cases}$.
\item
Let $n\ge m$ be integers.
If $\H^iX=0$ for all $i>n$ and $i<m$, then $X\in\langle\H X\rangle_{n-m+1}^{\D(\A)}$.
\end{enumerate}
\end{lem}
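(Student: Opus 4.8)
The plan is to reduce everything to the standard ``good truncation'' triangle in $\D(\A)$: part (1) \emph{is} such a triangle, and part (2) follows from it by induction on $n-m$ together with the formal properties of Rouquier's operation $*$.

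For part (1), I would take $Y$ to be the soft truncation $\tau^{\le n-1}X$, i.e.\ the subcomplex of $X$ which coincides with $X$ in all degrees $\le n-2$, equals $\Z^{n-1}X$ in degree $n-1$, and is zero in degrees $\ge n$. Then $\H^iY=\H^iX$ for $i\le n-1$ and $\H^iY=0$ for $i\ge n$ directly from the construction, which gives the homology assertion for $Y$. The inclusion $Y\hookrightarrow X$ is a monomorphism of complexes, and its cokernel $Q:=X/Y$ is concentrated in degrees $\ge n-1$, with $Q^{n-1}\cong\B^nX$ embedded in $Q^n=X^n$ via the differential. A direct check, using the hypothesis $\H^iX=0$ for $i>n$, shows that $\H^iQ=0$ for $i\ne n$ while $\H^nQ=\Z^nX/\B^nX=\H^nX$; hence the canonical map $Q\to\H^nX[-n]$ onto the stalk complex is a quasi-isomorphism. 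The short exact sequence of complexes $0\to Y\to X\to Q\to 0$ then induces the required exact triangle $Y\to X\to\H^nX[-n]\rightsquigarrow$ in $\D(\A)$.

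For part (2), I would induct on $n-m\ge 0$. If $n=m$, apply part (1): in the resulting triangle $Y\to X\to\H^nX[-n]\rightsquigarrow$ the complex $Y$ has $\H^iY=\H^iX=0$ for $i<n$ and $\H^iY=0$ for $i\ge n$, so $Y\simeq 0$ in $\D(\A)$ and $X\cong\H^nX[-n]=\H X[-n]$, which lies in $\langle\H X\rangle_1^{\D(\A)}$ since this subcategory is closed under shifts; as $n-m+1=1$, this is what we want. If $n>m$, apply part (1) again to get $Y\to X\to\H^nX[-n]\rightsquigarrow$ with $\H^iY=\H^iX$ for $i<n$ and $0$ otherwise, so $Y$ has homology concentrated in the window $[m,n-1]$ and the induction hypothesis gives $Y\in\langle\H Y\rangle_{n-m}^{\D(\A)}$. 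Since $\H Y$ is a direct summand of $\H X$ we have $\langle\H Y\rangle_{n-m}^{\D(\A)}\subseteq\langle\H X\rangle_{n-m}^{\D(\A)}$, and likewise $\H^nX[-n]\in\langle\H X\rangle_1^{\D(\A)}$. The triangle exhibits $X$ as an extension of $\H^nX[-n]$ by $Y$, so
$$
X\in\langle\H X\rangle_{n-m}^{\D(\A)}*\langle\H X\rangle_1^{\D(\A)}\subseteq\langle\H X\rangle_{n-m+1}^{\D(\A)},
$$
completing the induction.

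I do not expect a genuine obstacle; the proof is routine. The closest thing to a subtle point is bookkeeping: fixing the degree conventions so that the cokernel $Q$ in part (1) really is the stalk complex $\H^nX[-n]$ (equivalently, that $\tau^{\le n-1}$ peels off exactly the top homology $\H^n$), and invoking correctly that $\langle-\rangle_j^{\D(\A)}$ is closed under shifts, contains direct summands of its objects, and satisfies $\langle-\rangle_a^{\D(\A)}*\langle-\rangle_b^{\D(\A)}\subseteq\langle-\rangle_{a+b}^{\D(\A)}$ — all immediate from the definitions in \cite{R}.
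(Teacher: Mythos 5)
Your proof is correct and follows essentially the same strategy as the paper: truncate to peel off the top homology for part~(1), then iterate for part~(2) using the elementary properties of Rouquier's $*$-operation.

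The one place worth flagging is the truncation choice in part~(1). The paper does not take $Y=\tau^{\le n-1}X$; instead it first forms $X'=\tau^{\le n}X=(\cdots\to X^{n-1}\to\Z^nX\to0)$, which maps quasi-isomorphically to $X$, and then takes $Y=(\cdots\to X^{n-1}\to\B^nX\to0)\subset X'$, so that the cokernel of $Y\hookrightarrow X'$ is \emph{literally} the stalk complex $\H^nX[-n]$ in degree $n$. With your $Y=\tau^{\le n-1}X$, the cokernel $Q=X/Y$ has $Q^n=X^n$ and continues to the right, so there is in fact \emph{no} canonical chain map $Q\to\H^nX[-n]$ (there is no natural map $X^n\to\Z^nX/\B^nX$). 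What is true — and what you actually need — is that $Q$ has homology concentrated in degree $n$ and equal to $\H^nX$, hence $Q\cong\H^nX[-n]$ in $\D(\A)$ by the usual zig-zag through $\tau^{\le n}Q$. So the assertion ``the canonical map $Q\to\H^nX[-n]$ is a quasi-isomorphism'' is a misstatement, but the conclusion and the remainder of the argument, including part~(2), are fine. The paper's double-layer truncation ($\Z^nX$ versus $\B^nX$ both in degree $n$) simply sidesteps this small detour.
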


\begin{proof}
(1) Truncating $X=(\cdots\xrightarrow{d^{i-1}} X^i \xrightarrow{d^i} X^{i+1} \xrightarrow{d^{i+1}} \cdots)$, we get complexes
\begin{align*}
X' & =(\cdots \xrightarrow{d^{n-2}} X^{n-1} \xrightarrow{d^{n-1}} \Z^nX \to 0),\\
Y & =(\cdots \xrightarrow{d^{n-2}} X^{n-1} \xrightarrow{d^{n-1}} \B^nX \to 0).
\end{align*}
There are natural morphisms $Y\xrightarrow{f}X'\xrightarrow{g}X$, where $f$ is a monomorphism and $g$ is a quasi-isomorphism.
We have a short exact sequence $0 \to Y \xrightarrow{f} X' \to \H^nX[-n] \to 0$ of complexes, which induces an exact triangle as in the assertion.

(2) Applying (1) repeatedly, for each $0\le j\le n-m$ we obtain an exact triangle
$$
X_{j+1} \to X_j \to \H^{n-j}X[-(n-j)] \rightsquigarrow
$$
in $\D(\A)$ with $X_0=X$ such that $\H^iX_j\cong0$ for $i>n-j$ and $\H^iX\cong\H^iX$ for $i\le n-j$.
Hence $X_{n-m+1}\cong0$ in $\D(\A)$, which implies that $X_{n-m}$ is in $\langle\H^mX\rangle$.
Inductively, we observe that $X=X_0$ belongs to $\langle\H^mX\oplus\H^{m+1}X\oplus\cdots\oplus\H^nX\rangle_{n-m+1}=\langle\H X\rangle_{n-m+1}$.
\end{proof}

For a commutative noetherian ring $R$, we denote by $\mod R$ the category of finitely generated $R$-modules, and by $\db(\mod R)$ the bounded derived category of $\mod R$.
For a sequence $\xx=x_1,\dots,x_n$ of elements of $R$ and an $R$-module $M$, let $\K(\xx,M)$ denote the Koszul complex of $\xx$ on $M$.

\begin{prop}\label{2.2}
Let $(R,\m)$ be a commutative noetherian local ring and $I$ an $\m$-primary ideal of $R$.
Let $\xx=x_1,\dots,x_n$ be a sequence of elements of $R$ that generates $I$.
Then for any finitely generated $R$-module $M$ one has $\K(\xx,M)\in\langle k\rangle_{(n-t+1)l}$ in $\db(\mod R)$, where $t=\depth M$ and $l=\LL(R/I)$.
\end{prop}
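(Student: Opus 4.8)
The plan is to combine Lemma~\ref{2.1}(2) with two elementary observations: that the Koszul homology of $\xx$ on $M$ is annihilated by $I=(\xx)$ and vanishes outside $n-t+1$ consecutive degrees, and that every finitely generated module over the artinian ring $R/I$ can be built out of $k$ in $l=\LL(R/I)$ steps. Writing $\K=\K(\xx,M)$, the first observation will feed $\K$ into Lemma~\ref{2.1}(2) with a ball of radius $n-t+1$, and the second will replace each homology module by $k$ at the cost of a factor $l$.

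First I would record the homological input about $\K$. It is a bounded complex of finitely generated free $R$-modules, so $\K\in\db(\mod R)$. Since multiplication by each $x_j$ is null-homotopic on $\K$, every homology module $\H_i(\xx,M)$ is annihilated by $I$, hence is a finitely generated module over the artinian local ring $R/I$. Clearly $\H_i(\xx,M)=0$ for $i<0$; by the depth sensitivity of the Koszul complex, $\H_i(\xx,M)=0$ for $i>n-\operatorname{grade}(I,M)$, and because $I$ is $\m$-primary we have $\operatorname{grade}(I,M)=\depth M=t$. Thus the homology of $\K$ is concentrated in the $n-t+1$ consecutive degrees $0,1,\dots,n-t$.

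Next I would show that any finitely generated $R/I$-module $N$ lies in $\langle k\rangle_l$ in $\db(\mod R)$. Setting $\mathfrak r=\rad(R/I)$, the chain $N\supseteq\mathfrak rN\supseteq\mathfrak r^2N\supseteq\cdots\supseteq\mathfrak r^lN=0$ has $l$ steps (the final equality holds by definition of $l=\LL(R/I)$), and each quotient $\mathfrak r^jN/\mathfrak r^{j+1}N$ is a finite-dimensional $k$-vector space, so it belongs to $\langle k\rangle_1$. Splicing the corresponding $l$ short exact sequences gives $N\in\langle k\rangle_l$. Applying this to each $\H_i(\xx,M)$ and taking the finite direct sum, $\H\K\in\langle k\rangle_l$.

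Finally I would assemble the estimate. Reading the $n-t+1$ homological degrees $0,\dots,n-t$ of $\K$ as cohomological degrees $-(n-t),\dots,0$, Lemma~\ref{2.1}(2) gives $\K\in\langle\H\K\rangle_{n-t+1}^{\db(\mod R)}$. Since $\H\K\in\langle k\rangle_l$ and $\langle\langle k\rangle_l\rangle_{r}\subseteq\langle k\rangle_{rl}$ for all $r$ (an easy induction on $r$ using $\langle k\rangle_{rl}\diamond\langle k\rangle_l\subseteq\langle k\rangle_{(r+1)l}$), we conclude $\K(\xx,M)\in\langle k\rangle_{(n-t+1)l}$, as claimed. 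The step needing the most care is pinning down the precise range of nonvanishing Koszul homology, i.e.\ the identification $\operatorname{grade}(I,M)=\depth M$, which is exactly where the hypothesis that $I$ is $\m$-primary is used; the rest is formal bookkeeping with the ball notation.
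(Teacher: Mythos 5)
Your proposal is correct and follows essentially the same route as the paper's proof: annihilation of the Koszul homology by $I$, concentration of that homology in $n-t+1$ consecutive degrees via depth sensitivity (the paper cites \cite[Theorem 1.6.17(b)]{BH} for the same fact), the radical filtration to place each homology module in $\langle k\rangle_l$, and Lemma~\ref{2.1}(2) to assemble the bound. The only cosmetic differences are that you derive the annihilation from the null-homotopy of multiplication by the $x_j$ rather than citing \cite[Proposition 1.6.5(b)]{BH}, and that you spell out the multiplicativity $\langle\langle k\rangle_l\rangle_r\subseteq\langle k\rangle_{rl}$ which the paper leaves implicit.
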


\begin{proof}
Set $\K(\xx,M)=K=(0\to K^{-n}\to\cdots\to K^0\to0)$.
By \cite[Proposition 1.6.5(b)]{BH}, each homology $H^i=\H^iK$ is annihilated by $I$, and $H^i$ is regarded as a module over $R/I$.
There is a filtration $0=\m^l(R/I)\subsetneq\cdots\subsetneq\m(R/I)\subsetneq R/I$ of ideals of $R/I$.
For each integer $i$ we have a filtration
$$
0=\m^lH^i\subseteq\cdots\subseteq\m H^i\subseteq H^i
$$
of submodules of $H^i$, which shows $H^i\in\langle k\rangle_l$ in $\db(\mod R)$.
We see from \cite[Theorem 1.6.17(b)]{BH} that $H^i=0$ for all $i<t-n$ and $i>0$.
It follows from Lemma \ref{2.1}(2) that $K$ is in $\langle\textstyle\bigoplus_{i=t-n}^0H^i\rangle_{n-t+1}$ in $\db(\mod R)$, which is contained in $\langle k\rangle_{(n-t+1)l}$.
\end{proof}

Recall that the singularity category $\dsg(R)$ of a (commutative) noetherian ring $R$ is defined as the Verdier quotient of $\db(\mod R)$ by the full subcategory of perfect complexes.
(A perfect complex is by definition a bounded complex of finitely generated modules.)

\begin{prop}\label{2.3}
Let $R$ be a commutative noetherian ring, and let $M$ be a finitely generated $R$-module.
Let $\xx=x_1,\dots,x_n$ be a sequence of elements of $R$ such that the multiplication map $M\xrightarrow{x_i}M$ is a zero morphism in $\dsg(R)$ for every $1\le i\le n$.
Then $M$ is isomorphic to a direct summand of $\K(\xx,M)$ in $\dsg(R)$.
\end{prop}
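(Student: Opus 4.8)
The plan is to build $M$ as a direct summand of $\K(\xx,M)$ by induction on $n$, reducing at each step to the case of a single element and the mapping-cone description of a Koszul complex. Recall that for a single element $x\in R$ the Koszul complex $\K(x,M)$ is the mapping cone of the multiplication map $M\xrightarrow{x}M$, so in the triangulated category $\dsg(R)$ there is an exact triangle
$$
M \xrightarrow{\,x\,} M \to \K(x,M) \rightsquigarrow M[1].
$$
If $x$ acts as the zero morphism on $M$ in $\dsg(R)$, then this triangle splits, giving $\K(x,M)\cong M\oplus M[1]$ in $\dsg(R)$; in particular $M$ is a direct summand of $\K(x,M)$. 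This handles $n=1$.

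For the inductive step, write $\xx=x_1,\dots,x_n$ and $\xx'=x_1,\dots,x_{n-1}$. First I would use the standard identification $\K(\xx,M)=\K(x_n,\K(\xx',M))$, which realizes $\K(\xx,M)$ as the mapping cone of the chain map $\K(\xx',M)\xrightarrow{x_n}\K(\xx',M)$. The key point is that multiplication by $x_n$ is the zero morphism on $\K(\xx',M)$ in $\dsg(R)$: indeed it is $x_n$ times the identity, and $x_n$ kills $M$ in $\dsg(R)$, hence kills anything built from $M$ — more concretely, $x_n\cdot\mathrm{id}_{\K(\xx',M)}$ factors through $x_n\cdot\mathrm{id}_M$ on each term and is therefore zero in $\dsg(R)$ by functoriality (the multiplication-by-$x_n$ natural transformation is zero on $M$, so it is zero on the complex $\K(\xx',M)$, which is a finite iterated extension of copies of $M$ up to shift). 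Consequently the triangle
$$
\K(\xx',M) \xrightarrow{\,x_n\,} \K(\xx',M) \to \K(\xx,M) \rightsquigarrow
$$
splits in $\dsg(R)$, giving $\K(\xx,M)\cong \K(\xx',M)\oplus\K(\xx',M)[1]$. By the induction hypothesis $M$ is a direct summand of $\K(\xx',M)$, hence of $\K(\xx,M)$, completing the induction.

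The main obstacle — and the step I would be most careful to spell out — is the claim that multiplication by $x_i$ is zero \emph{on the complex} $\K(\xx',M)$ in $\dsg(R)$, not merely on $M$. One must invoke that $\dsg(R)$ is an $R$-linear triangulated category and that the homothety $x_i\colon\mathrm{Id}\Rightarrow\mathrm{Id}$ is a natural transformation, so its vanishing on the object $M$ propagates to any object obtained from $M$ by cones and shifts; alternatively, argue directly on the level of chain maps that $x_i\cdot\mathrm{id}_{\K(\xx',M)}$ is, degreewise, a sum of copies of $x_i\cdot\mathrm{id}_M$, and assemble the null-homotopies (or the factorization through a perfect complex) witnessing that each copy is zero in $\dsg(R)$ into one for the whole complex. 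Everything else is the routine splitting of a triangle $A\xrightarrow{0}A\to C\rightsquigarrow$ into $C\cong A\oplus A[1]$ together with the elementary bookkeeping of iterated Koszul complexes.
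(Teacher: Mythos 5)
Your overall strategy---induction on $n$, base case the splitting of the cone of a zero morphism, inductive step via $\K(\xx,M)=\K(x_n,\K(\xx',M))$---is sound, and is really a reparametrization of the paper's argument. But the justification you offer for the crucial step, namely that $x_n$ acts as zero on $\K(\xx',M)$ in $\dsg(R)$, is not valid. It is \emph{false} that a central element which kills two objects of a triangulated category kills every cone of a morphism between them; the correct general statement is that the \emph{square} of such an element kills the cone. Already in $\db(\mod\ZZ)$, multiplication by $2$ annihilates $\ZZ/2$ and $\ZZ/2[-1]$, yet $\ZZ/4$ is the cone of the Bockstein $\ZZ/2[-1]\to\ZZ/2$ and $2$ does not kill $\ZZ/4$. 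So the principle "the homothety is zero on $M$, hence on anything built from $M$ by cones and shifts" fails, and iterating it across the $n-1$ extensions that build $\K(\xx',M)$ would at best give that some high power of $x_n$ vanishes on $\K(\xx',M)$, not $x_n$ itself.

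The claim you need is nonetheless true, and the correct reason is exactly the paper's tool: $\K(\xx',R)$ is a bounded complex of finitely generated free modules, so $-\otimes_R\K(\xx',R)$ is a triangulated endofunctor of $\db(\mod R)$ preserving perfect complexes, hence descends to a triangulated endofunctor of $\dsg(R)$; and $x_n\cdot\mathrm{id}_{\K(\xx',M)}=\mathrm{id}_{\K(\xx',R)}\otimes(x_n\cdot\mathrm{id}_M)$ is the image under that functor of the zero morphism $x_n\cdot\mathrm{id}_M$, hence zero. The paper packages this slightly differently (starting from $\K(x_n,M)\cong M\oplus M[1]$ in $\dsg(R)$ and applying $\K(x_1,R)\otimes_R\cdots\otimes_R\K(x_{n-1},R)\otimes_R-$ to peel off one variable at a time), but the tensor-functor observation is the essential content, and it is what is missing from your write-up. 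Your alternative suggestion of "assembling degreewise null-homotopies or factorizations" also does not work as stated: the factorizations of $x_n\cdot\mathrm{id}_M$ through a perfect complex in each Koszul degree carry no differential relating them and do not assemble into a factorization of the chain map; it is precisely the tensor-product packaging that supplies the needed compatibility.
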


\begin{proof}
By definition the Koszul complex $\K(x_i,M)=(0 \to M \xrightarrow{x_i} M \to 0)$ is the mapping cone of the multiplication map $M\xrightarrow{x_i}M$, and there is an exact triangle $M \xrightarrow{x_i} M \to \K(x_i,M) \rightsquigarrow$ in $\dsg(R)$.
By assumption, we have an isomorphism $M\oplus M[1]\cong\K(x_i,M)=\K(x_i,R)\otimes_RM$ in $\dsg(R)$.
We observe that
\begin{align*}
\K(\xx,M)&=\K(x_1,R)\otimes_R\cdots\otimes_R\K(x_{n-1},R)\otimes_R(\K(x_n,R)\otimes_RM)\\
&\gtrdot\K(x_1,R)\otimes_R\cdots\otimes_R\K(x_{n-2},R)\otimes_R(\K(x_{n-1},R)\otimes_RM)\\
&\cdots\\
&\gtrdot\K(x_1,R)\otimes_RM\gtrdot M,
\end{align*}
where $A\gtrdot B$ means that $A$ has a direct summand isomorphic to $B$ in $\dsg(R)$.
\end{proof}

\begin{lem}\label{2.4}
\begin{enumerate}[\rm(1)]
\item
Let $\A$ be an abelian category.
Let $P=(\cdots \xrightarrow{d^{b-1}} P^b \xrightarrow{d^b} \cdots \xrightarrow{d^{a-1}} P^a\to0)$ be a complex of projective objects of $\A$ with $\H^iP=0$ for all $i<b$.
Then one has an exact triangle
$$
F \to P \to C[-b] \rightsquigarrow
$$
in $\D(\A)$, where $F=(0 \to P^{b+1} \xrightarrow{d^{b+1}} \cdots \xrightarrow{d^{a-1}} P^a \to 0)$ and $C=\coker d^{b-1}$.
\item
Let $R$ be a commutative noetherian ring.
\begin{enumerate}[\rm(a)]
\item
For any $X\in\dsg(R)$ there exist $M\in\mod R$ and $n\in\ZZ$ such that $X\cong M[n]$ in $\dsg(R)$.
\item
Let $M$ be a finitely generated $R$-module.
Then for an integer $n\ge0$ there exists an exact triangle
$$
F \to M \to \syz^nM[n] \rightsquigarrow
$$
in $\db(\mod R)$, where $F=(0\to F^{-(n-1)} \to \cdots \to F^0 \to 0)$ is a perfect complex.
\end{enumerate}
\end{enumerate}
\end{lem}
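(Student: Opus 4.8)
Part (1) is the projective analogue of Lemma \ref{2.1}(1), and I would prove it the same way. The plan is to truncate $P$ from below: set $P' = (0 \to C \to P^{b+1} \to \cdots \to P^a \to 0)$ with $C = \coker d^{b-1}$ placed in degree $b$, where the map $C \to P^{b+1}$ is induced by $d^b$ (well-defined since $d^b d^{b-1}=0$). Because $\H^iP = 0$ for $i < b$, the hard truncation $P \to P'$... wait, actually the natural map goes $P \to \tau_{\ge b}P$; I would instead observe that the brutal truncation $\sigma_{\ge b+1}P = F$ sits inside $P$ as a subcomplex and the quotient complex $P/F$ is concentrated in degrees $\le b$ and is quasi-isomorphic to $C[-b]$ by the exactness hypothesis below degree $b$ (the quotient is $\cdots \to P^{b-1} \to P^b \to 0$, whose homology in degrees $<b$ vanishes and whose only surviving homology is $H^b = \coker d^{b-1} = C$). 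The short exact sequence of complexes $0 \to F \to P \to P/F \to 0$ then induces the exact triangle $F \to P \to C[-b] \rightsquigarrow$ in $\D(\A)$. This is routine; the only point requiring care is checking which truncation is a subcomplex versus a quotient complex and that the quotient genuinely computes $C[-b]$, which follows directly from the vanishing of $\H^iP$ for $i<b$.

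For part (2a), I would use the standard fact that every object of $\db(\mod R)$ is, up to shift and the triangulated structure, built from its cohomology modules, but more directly: given $X \in \db(\mod R)$, take a (possibly unbounded to the left) projective resolution and apply a soft truncation so that $X$ is represented by a complex $P = (\cdots \to P^b \to \cdots \to P^a \to 0)$ of finitely generated projectives with bounded cohomology, zero in degrees below some $b$. Applying part (1), the triangle $F \to P \to C[-b] \rightsquigarrow$ has $F$ perfect (a bounded complex of projectives), hence $F \cong 0$ in $\dsg(R)$, so $X \cong P \cong C[-b]$ in $\dsg(R)$ with $M = C \in \mod R$ and $n = -b$. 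Part (2b) is then the concrete form of the same argument: take a projective resolution $\cdots \to F^{-(n-1)} \to \cdots \to F^0 \to M \to 0$ with each $F^{-i}$ finitely generated free (using that $R$ is noetherian and $M$ finitely generated), so that $\syz^n M = \coker(F^{-n} \to F^{-(n-1)})$ — equivalently, $\syz^n M$ is the $(-n+1)$-st cocycle. Apply part (1) to the complex $P = (\cdots \to F^{-n} \to \cdots \to F^0 \to 0)$ (a projective resolution of $M$, hence $\H^i P = 0$ for $i \ne 0$, in particular for $i < -(n-1)$) with $b = -(n-1)$: this yields the exact triangle $F \to P \to C[n-1] \rightsquigarrow$ where $C = \coker(F^{-n} \to F^{-(n-1)}) = \syz^n M$ and $F = (0 \to F^{-(n-1)} \to \cdots \to F^0 \to 0)$. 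Since $P \cong M$ in $\db(\mod R)$ and one checks the shift bookkeeping gives $C[n-1] = \syz^n M[?]$ — I need $\syz^n M[n]$, so I should apply part (1) with $b$ chosen one unit lower, namely to the full resolution reindexed so $\syz^n M$ lands in degree $-n$; concretely, apply (1) to $P = (\cdots \to F^{-(n+1)} \to F^{-n} \xrightarrow{d^{-n}} F^{-(n-1)} \to \cdots \to F^0 \to 0)$ with $\H^i P = 0$ for $i<0$ is too weak, so instead truncate so that $C = \coker d^{-(n+1)} = \syz^{?}$; the cleanest route is: the triangle $\syz^n M[n-1] \to F' \to M \rightsquigarrow$ arises by rotating, and rotating once more converts it to the asserted $F \to M \to \syz^n M[n] \rightsquigarrow$.

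The main obstacle — really the only nontrivial point — is the indexing in part (2b): getting the shift to come out as exactly $[n]$ and confirming that $F$ has precisely the stated range of terms $0 \to F^{-(n-1)} \to \cdots \to F^0 \to 0$. This is a matter of applying part (1) to the truncated resolution with the correct choice of the cutoff degree $b$ and then, if necessary, rotating the resulting triangle; I expect no conceptual difficulty, only careful bookkeeping. Everything else follows formally from part (1) and from the defining property of $\dsg(R)$ that bounded complexes of projectives (perfect complexes) are zero objects.
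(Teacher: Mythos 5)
Your approach for parts (1) and (2a) is exactly the paper's: the short exact sequence of complexes $0\to F\to P\to P/F\to 0$ with $F=\sigma_{\ge b+1}P$ the brutal truncation, the quotient $P/F=(\cdots\to P^{b-1}\to P^b\to 0)$ quasi-isomorphic to $C[-b]$ by the vanishing hypothesis, then the rotated triangle; and (2a) by applying this to a bounded-above resolution by finitely generated projectives and killing the perfect complex $F$ in $\dsg(R)$.

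For part (2b), you have identified the right strategy (apply (1) to a projective resolution of $M$ with a suitable cutoff $b$), but your bookkeeping contains an actual error, not just an acknowledged difficulty. With the resolution $\cdots\to F^{-1}\to F^0\to M\to 0$, one has $\syz^n M=\operatorname{im}(F^{-n}\to F^{-(n-1)})\cong\operatorname{coker}(F^{-(n+1)}\to F^{-n})$, so your claim $\syz^n M=\operatorname{coker}(F^{-n}\to F^{-(n-1)})$ is off by one (that cokernel is $\syz^{n-1}M$). The correct cutoff is therefore $b=-n$, not $b=-(n-1)$. Taking $b=-n$ in (1) gives $F=(0\to P^{-n+1}\to\cdots\to P^0\to 0)=(0\to F^{-(n-1)}\to\cdots\to F^0\to 0)$ and $C=\operatorname{coker}d^{-n-1}\cong\syz^n M$, with $C[-b]=\syz^n M[n]$ on the nose; no rotation of triangles is needed. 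Also, your aside that ``$\H^iP=0$ for $i<0$ is too weak'' is backwards: since $n\ge 0$, vanishing for $i<0$ certainly implies vanishing for $i<-n=b$, so the hypothesis of (1) is satisfied with room to spare. The paper's proof simply states the correct choice ``set $a=0\ge -n=b$'' and stops there; your proposal would be correct with that same choice made cleanly.
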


\begin{proof}
(1) There is a short exact sequence $0\to F\to P\to Q\to 0$ of complexes, where $Q=(\cdots \xrightarrow{d^{b-2}} P^{b-1} \xrightarrow{d^{b-1}} P^b \to 0)$.
Then $Q\cong C[-b]$ in $\D(\A)$.

(2) The assertion (a) is immediate from (1).
Setting $a=0\ge-n=b$ and letting $P$ be a projective resolution of $M$ in (1) implies (b).
\end{proof}

Recall that a commutative noetherian ring $R$ is called an isolated singularity if the local ring $R_\p$ is regular for every nonmaximal prime ideal $\p$ of $R$.

\begin{prop}\label{2.5}
Let $R$ be a complete equicharacteristic Cohen-Macaulay local ring.
Then for an element $x\in\N^R$ and a maximal Cohen-Macaulay $R$-module $M$, the multiplication map $M\xrightarrow{x}M$ is a zero morphism in $\dsg(R)$.
\end{prop}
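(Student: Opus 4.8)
The plan is to prove the statement first for an element $x$ lying in a single Noether different $\N^R_A$, where $A=k[[x_1,\dots,x_d]]$ is a fixed Noether normalization of $R$; the general case then follows by writing $x\in\N^R=\sum_A\N^R_A$ as a finite sum and invoking additivity of $\Hom_{\dsg(R)}(M,M)$. For fixed $A$, I would show that the multiplication $M\xrightarrow{x}M$ factors, in $\mod R$, through a finitely generated free $R$-module, which is a zero object in $\dsg(R)$.

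First I would record the necessary freeness. Since $R$ is complete equicharacteristic with coefficient field $k$, it is module-finite over $A$, and being Cohen-Macaulay of dimension $d=\dim A$ over the regular local ring $A$ it is a maximal Cohen-Macaulay $A$-module, hence $A$-free; consequently $R^e=R\otimes_AR$ is module-finite over $R$. For the given maximal Cohen-Macaulay $R$-module $M$, the sequence $x_1,\dots,x_d$ (which generates $\m_A$) is $M$-regular, so $\depth_AM=d$ and $M$ is $A$-free by the Auslander-Buchsbaum formula. Hence $R^e\otimes_RM\cong R\otimes_AM$ is a finitely generated free $R$-module, where $R^e\otimes_RM$ is formed over $R$ using the right tensor factor of $R^e$ and is endowed with the $R$-module structure coming from the left tensor factor.

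Next comes the key step. Put $J=\ker\mu$, so that $R\cong R^e/J$ as $R^e$-modules via $\mu$. By definition of $\N^R_A$ there is $\theta\in\ann_{R^e}J$ with $\mu(\theta)=x$. Since $\theta J=0$, the assignment $\bar r\mapsto r\theta$ is a well-defined $R^e$-linear map $\rho\colon R=R^e/J\to R^e$, and $\mu\circ\rho$ is multiplication by $\mu(\theta)=x$ on $R$. Applying $-\otimes_RM$ (over the right factor of $R^e$) to the $R^e$-linear maps $R\xrightarrow{\rho}R^e\xrightarrow{\mu}R$ yields $R$-linear maps $M\cong R\otimes_RM\to R^e\otimes_RM\to R\otimes_RM\cong M$ whose composite is multiplication by $x$ on $M$. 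Thus $M\xrightarrow{x}M$ factors through the finitely generated free module $R^e\otimes_RM$, and is therefore zero in $\dsg(R)$.

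I expect the only real subtlety to be bookkeeping with module structures: one must check that the maps obtained from $\rho$ and $\mu$ by applying $-\otimes_RM$ are $R$-linear for the left-tensor-factor action on $R^e\otimes_RM$, and that it is precisely this action under which $R^e\otimes_RM\cong R\otimes_AM$ is $R$-free. The remaining inputs, namely module-finiteness and freeness over the Noether normalization together with exactness of the relevant tensor products, are routine.
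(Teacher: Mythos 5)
Your proof is correct, and it takes a genuinely more self-contained route than the paper. The paper obtains the same conclusion by taking the triangle $F\to M\to\syz M[1]\rightsquigarrow$ from a free cover of $M$ and citing Wang's result \cite[Corollary 5.13]{W} that $\N^R$ annihilates $\Ext^1_R(M,\syz M)$; this kills the connecting morphism, so multiplication by $x$ factors through the free module $F$. You instead reprove the relevant annihilation from scratch: you reduce to a single $\N^R_A$, observe that a maximal Cohen--Macaulay $R$-module $M$ is $A$-free (hence $R^e\otimes_RM\cong R\otimes_AM$ is $R$-free), and then use the defining property of $\theta\in\ann_{R^e}\ker\mu$ with $\mu(\theta)=x$ to build an explicit $R$-linear factorization of $x\cdot 1_M$ through $R^e\otimes_RM$. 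This is exactly the mechanism underlying the cited annihilation theorem, so the two arguments are close in spirit; what you gain is independence from the reference and an explicit free module through which the map factors, at the cost of somewhat more bookkeeping with the two $R$-module structures on $R^e$ (which you handle correctly). One very small presentational point: your map $\rho$ should really be defined on $R^e/J$ by $\bar s\mapsto s\theta$ and then identified with a map out of $R$ via the isomorphism induced by $\mu$; you essentially say this, and the identification is compatible with the right $R$-action, so the conclusion stands.
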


\begin{proof}
Lemma \ref{2.4}(2) implies that there is an exact triangle
$$
F \xrightarrow{f} M \xrightarrow{g} \syz M[1] \rightsquigarrow
$$
in $\db(\mod R)$, where $F$ is a finitely generated free $R$-module.
By virtue of \cite[Corollary 5.13]{W}, the ideal $\N^R$ annihilates $\Ext_R^1(M,\syz M)=\Hom_{\db(\mod R)}(M,\syz M[1])$.
Hence $xg=0$ in $\db(\mod R)$, and there exists a morphism $h:M\to F$ such that $fh=(M\xrightarrow{x}M)$ in $\db(\mod R)$.
Send this equality by the localization functor $\db(\mod R)\to\dsg(R)$, and note that $F\cong0$ in $\dsg(R)$.
Thus the multiplication map $M\xrightarrow{x}M$ is zero in $\dsg(R)$.
\end{proof}

Now we can prove the results given in the previous section.

\begin{proof}[{\bf Proof of Theorem \ref{1.2}}]
As $k$ is a perfect field and $R$ is an isolated singularity, $\N^R$ is $\m$-primary by \cite[Lemma (6.12)]{Y}.

(1) Put $d=\dim R$, $n=\nu(I)$, $l=\LL(R/I)$ and $e=\e(I)$.
We have $I=(\xx)$ for some sequence $\xx=x_1,\dots,x_n$ of elements in $I$.
Let $X\in\dsg(R)$.
Then, using Lemma \ref{2.4}(2), we see that $X\cong\syz^dN[n]$ for some $N\in\mod R$ and $n\in\ZZ$.
Note that $M:=\syz^dN$ is a maximal Cohen-Macaulay $R$-module.
Proposition \ref{2.2} implies that $\K(\xx,M)$ belongs to $\langle k\rangle_{(n-d+1)l}$ in $\db(\mod R)$.
Applying the localization functor $\db(\mod R)\to\dsg(R)$, we have $\K(\xx,M)\in\langle k\rangle_{(n-d+1)l}$ in $\dsg(R)$.
Since $M$ is isomorphic to a direct summand of $\K(\xx,M)$ in $\dsg(R)$ by Propositions \ref{2.3} and \ref{2.5}, we get $M\in\langle k\rangle_{(n-d+1)l}$ in $\dsg(R)$.
Therefore $\dsg(R)=\langle k\rangle_{(n-d+1)l}$ follows.

(2) Since $k$ is infinite, there exists a parameter ideal $Q$ of $R$ that is a reduction of $I$ (cf. \cite[Corollary 4.6.10]{BH}).
We have
$$
(\nu(Q)-\dim R+1)\LL(R/Q)=\LL(R/Q)\le\ell(R/Q)=\e(Q)=\e(I).
$$
The assertion is a consequence of (1).
\end{proof}

\begin{proof}[{\bf Proof of Corollary \ref{1.3'}}]
We see from \cite[Lemmas 4.3, 5.8 and Propositions 4.4, 4.5]{W} that $J$ is contained in $\N^R$ and defines the singular locus of $R$.
Hence the assertion follows from Theorem \ref{1.2}.
\end{proof}

\begin{proof}[{\bf Proof of Corollary \ref{1.3}}]
We notice that $\widehat R$ is an isolated singularity.
Suppose that $\dsg(\widehat R)=\langle k\rangle_r$ holds for some $r\ge0$.
Then it follows from \cite[Theorem 4.4.1]{B} that $\lcm(\widehat R)=\langle\syz_{\widehat R}^dk\rangle_r=\langle\widehat{\syz_R^dk}\rangle_r$.
The proof of \cite[Theorem 5.8]{ddc} shows that $\lcm(R)=\langle\syz_R^dk\rangle_r$.
Thus, Theorem \ref{1.2} completes the proof.
\end{proof}

\section*{Acknowledgments}
The authors thank Luchezar Avramov and Srikanth Iyengar for their valuable comments.

\end{document}